\theoremstyle{plain}
\newtheorem{theorem}{Theorem}
\newtheorem{lemma}[theorem]{Lemma}
\newtheorem{corollary}[theorem]{Corollary}
\theoremstyle{definition}
\title[Khintchine-Pollaczek formula]{Khintchine-Pollaczek formula for random walks whose steps have one geometric tail}
\author{Robert~O. Bauer}
\address{Altgeld Hall\\Department of Mathematics\\ 	University of Illinois at Urbana-Champaign\\ 	1409 West Green Street \\ Urbana, IL 61801, USA}\email{rbauer13@illinois.edu}
\keywords{Khintchine-Pollaczek formula, ladder heights, geometric tails, tandem queue, dynamic instability, microtubules}
\subjclass[2000]{60B05, 28C20}
\thanks{Research supported in part by NSA grant H98230-10-1-0193}
\begin{document}

\maketitle

\begin{abstract}
We derive a Khinchine-Pollaczek formula for random walks whose steps have a geometric left tail. The construction rests on the memory-less property of the geometric distribution. An example from a tandem queue modeling dynamic instability for microtubules is given. 
\end{abstract}

\section{Introduction}

Let $X_1,X_2,\dots,$ be a sequence of independent random variables with common distribution $F$ not concentrated on a half-axis. The {\em induced random walk} is the sequence of random variables 
\[
S_0=0,\quad S_n=X_1+\cdots+X_n.
\]
If $\mathbb E[X_n]<0$, then $\sup_n S_n<\infty$, $P-a.s.$ An explicit expression for the function
\[
s\mapsto\mathbb E\left[s^{\sup_n S_n}\right]
\]
is a Khintchine-Pollaczek formula for the random walk $\{S_n\}$.

A motivation to calculate a Khintchine-Pollaczek formula arises in queueing theory. Consider a single server queue, where the times between the arrival of individual customers are assumed to be independent and identically distributed random variables $\sigma_1,\sigma_2,\dots$. The time it takes the server to serve the $n$-th customer is $\tau_n$, and we assume that $\tau_1,\tau_2,\dots$ are independent and identically distributed random variables which are independent of the inter-arrival times $\sigma_n$. If the server is empty when a customer arrives, then the customer is serviced immediately. Otherwise, the customer joins the queue in front of the server. If $w_n$ denotes the waiting time of the $n$-th customer, from the time of her arrival until the time her service commences, then it follows readily that
\[
w_{n+1}=\max(0, w_n+\tau_n-\sigma_n),
\] 
where $\sigma_n$ denotes the inter-arrival time between the $n$-th and $n+1$-st customer. Setting  $X_n=\tau_n-\sigma_n$, consider the induced random walk $\{S_n\}$. Then 
\begin{align}
\max_{k\le n+1}S_k&=\max(0,X_1,X_1+X_2,\dots,X_1+\cdots+X_{n+1})\notag\\
&\overset{law}=\max(0,X_{n+1},X_{n+1}+X_n,\dots,X_{n+1}+\cdots+X_1)\notag\\
&=\max(0,X_{n+1}+\max(0,X_n,X_n+X_{n-1},\dots,X_n+\cdots+X_1))\notag\\
&\overset{law}=\max(0,X_{n+1}+\max_{k\le n}S_k),
\end{align}
where $\overset{law}=$ denotes equality in law of the random variables to the right and left of the equality sign. Whence 
\[
w_n\overset{law}=\max_{k\le n}S_k,
\]
and 
\[
P(\sup_n S_n>x)=\uparrow\lim_n P(\max_{k\le n}S_k>x)=\uparrow \lim_n P(w_n>x).
\]
Thus knowledge of the law of $\sup_n S_n$ gives the limiting probabilities for the waiting time sequence $w_n$.

There are a few classic cases where a Khintchine-Pollaczek formula can be computed, see \cite{Asmussen:2003} and \cite[XII.5]{Feller:1971}. For example, if the queue is of the M/M/1-type, i.e. both inter-arrival times and service times are exponential, then the Khintchine-Pollaczek formula is known. It is also known if only the $\sigma$s or only the $\tau$s are exponential while the other is continuous (``one-sided exponential tail"). If the walk can only take integer steps, then the Khintchine-Pollaczek formula is known in principle---using a partial fraction decomposition---if the step size is bounded. In this paper we derive a Khintchine-Pollaczek formula in a case where the step size is integer but the steps can be arbitrarily large. The particular steps we will be considering have a geometric tail on one side. That such a formula should exist in this case is not surprising. Indeed, the key to the proof in the M/M/1
 case (and its one-sided extensions) is the memoryless property of the exponential distribution. The geometric distribution shares the memoryless property with the exponential distribution (indeed it is the ``discrete version'' of the exponential distribution).

\subsection{Motivation}

Queues to which our result applies can arise as follows. Consider two coupled single server queues: Queue 1 is an M/M/1-queue, with inter-arrival times $\sigma_n$ exponential with parameter $1/\alpha$ and service times $\tau_n$ exponential with parameter $1/\beta$. Once a customer finishes service at the first server, he immediately joins Queue 2. The server for Queue 2 becomes active ONLY if Server 1 is idle, i.e. if there are no customers in Queue 1. When Server 2 is active, the service times $\tilde{\tau}_n$ are exponential with parameter $1/\gamma$. We assume that $\beta+\gamma<\alpha$, i.e. that the average time that it takes for a customer to be serviced by both, Server 1 and Server 2, is less than the average inter-arrival time between customers  arriving at Server 1.

Then the number of customers $N$ in Queue 2 when Server 2 commences service equals the number of customers serviced by Server 1 during its initial ``busy period.'' Since Server 1 is M/M/1, the distribution of $N$ is known explicitly, see \eqref{E:distn}. Since Server 2 is only active as long as no new customers arrive at Server 1, Server 2 will only complete service of a customer if $\sigma_N>\tilde{\tau}_1$. Since both these variables are exponential and independent, this probability is $\alpha/(\alpha+\gamma)$. Furthermore, by the memoryless property of the exponential distribution, the probability that Server 2 completes service of $m$ customers in its queue before a new customer arrives at Server 1 but not $m+1$ is 
\begin{equation}\label{E:geo}
\left(\frac{\alpha}{\alpha+\gamma}\right)^m\frac{\gamma}{\alpha+\gamma}.
\end{equation}
These are the probabilities for the geometric distribution. Consider now the number of customers $Z_1$ in Queue 2 at the time when it becomes idle, i.e. when a customer arrives at Server 1.  Then 
\[
Z_1=\max(N_1-M_1,0),
\] 
where $N_1$ is the number of customers served by Server 1 during the initial busy period, and $M_1$ is an independent geometric random variable with distribution \eqref{E:geo}. Similarly, if $Z_k$ denotes the number of customers in Queue 2 after its $k$-th busy period, then 
\[
Z_{k+1}=\max(0, Z_k+N_{k+1}-M_{k+1}),
\]
where $N_1,N_2,\dots$ and $M_1,M_2,\dots$ are independent sequences of independent and identically distributed random variables.  Thus the sequence $Z_{k}$ forms a ``waiting time sequence" of an abstract queue with inter-arrival times $N_k$ and service times $M_k$. The associated random walk takes integer steps with a distribution with a geometric left tail!   

\subsection{Further motivation} \label{SS:furthermot}

For properly chosen parameters $\alpha, \beta, \gamma$, the above tandem queue exhibits a behavior which is known as ``dynamic instability'' in microbiology, see \cite[Ch. 16]{Alberts:2008}. To observe this phenomenon, consider each customer as an {\em oriented edge}, with the orientation indicated by an arrowhead. Initially, all edges are {\em red}. Queue 1 in front of Server 1 then corresponds to a chain of red edges. We assume that edges always attach to the chain (join the queue) pointing toward the server. The service provided by Server 1 consists of changing the color of an edge from {\em red} to {\em blue}. Once an edge has changed color we consider it part of Queue 2 in front of Server 2. Thus, while Server 1 is busy, we obtain a chain of oriented edges, all pointing in the same direction---say towards the {\em plus-end}---consisting of a blue part (Queue 2) and a red part (Queue 1). For descriptive purposes, we call this red part of the chain the ``cap.'' The cap disappears precisely when Server 1 becomes idle. Then Server 2 becomes active. The service provided by Server 2 is {\em dissociation}, and the service is provided in the order ``last come, first served.'' Blue edges dissociate from the {\em minus-end} of the chain until a red edge arrives at Server 1---at this instant Server 1 corresponds to the minus-end---thereby {\em capping} the chain. In analogy with the situation for microtubules in cells, we call the period when Server 2 is busy and the chain shortens {\em catastrophe}, and the arrival of a red edge capping the shortening chain and initiating a busy period for Server 1 {\em rescue}. Dynamic instability is the ``random'' alternating of catastrophe and rescue.

The term dynamic instability was coined for protein polymers such as microtubules formed by the linear association of identical protein subunits (tubulin monomers). Dynamic instability refers to the random alternating of a longer and slower growth phase (polymerization) with a shorter and faster shrinking phase (depolymerization). The monomers are either bound to GTP or GDP (Guanosine tri/bi-phosphate). GTP-bound monomers tend to polymerize while GDP-bound monomers tend to depolymerize.  Furthermore, the monomers have an orientation, with a plus end and a minus end, and they attach so that a plus end joins with a minus end. Once a GTP-bound monomer attaches to a polymer, the polymer exerts a GTP-ase action on the monomer and hydrolyzes the GTP to GDP. 

In the queueing theory model, the GTP-bound monomers are red oriented edges, the GDP-bound monomers are blue and the GTP-ase action of the polymer is the service provided by Server 1. The queueing discipline of Server 2, ``last come, first served,'' corresponds for microtubules to what is called a {\em vectorial} model for hydrolysis.



\subsection{Structure of this paper} In section \ref{S:pre}, we introduce the notation and list some results from the literature we will use. The mathematical core of this paper is section \ref{S:geotails}, where we obtain the Khintchine-Pollaczek formula. The proof is structured as for the analogous result in the M/M/1 case. A difficulty not arising in the M/M/1 case (or the one-sided extensions, M/G/1, G/M/1) is that we have to deal with ties, which require making the distinction between strict and weak ladder indices and heights. This circumstance complicates the argument as well as the formula we obtain. Finally, in section \ref{S:example}, we apply our results to the example of the tandem queue modeling dynamic instability described above. 
 
\section{Preliminaries}\label{S:pre}

Let $X_1,X_2,\dots,$ be a sequence of independent random variables with common distribution $F$ not concentrated on a half-axis, and denote $\{S_n\}$ the induced random walk.
The integer $n$ is a (strict) ladder index for the random walk if 
\[
\max_{0\le k< n}S_k<S_n.
\] 
If $n$ is a ladder index, then $S_n$ is the {\em ladder height} associated with $n$. For Borel sets $A\subset(0,\infty)$, define a finite measure $L$ by
\begin{equation}
L(A)=\sum_{n=1}^{\infty} P\left(\max_{0\le k< n}S_k=0<S_n\in A\right).
\end{equation}
The probability that there is at least one ladder index is 
\begin{equation}\label{E:defp}
p=L(0,\infty)=P\left(\sup_n S_n>0\right).
\end{equation}
Let $T_1$ be the first ladder index, and denote $H_1$ the first ladder height, i.e. $H_1=S_{T_1}$. These variables are defective with probability $1-p$  and remain undefined if there is no first ladder index. In fact, for $x>0$,
\begin{equation}\label{E:LH1}
L[x,\infty)=P(H_1\ge x).
\end{equation}

Following the notation in \cite[Chapter XII]{Feller:1971}, we call the smallest $n$ such that $S_1<0,\dots,S_{n-1}<0$, but $S_n\ge0$ the {\em first weak ladder index} and denote it by $\overline{T}_1$. The corresponding {\em weak ladder height} is denoted by $\overline{H}_1$, so that $\overline{H}_1=S_{\overline{T}_1}$. Again, these variables are possibly defective. We set 
\begin{equation}\label{E:defzeta}
\zeta=P\left(\overline{H}_1=0\right)=\sum_{n=1}^\infty P\left(\max_{1\le k<n}S_k<0, S_n=0\right).
\end{equation}
Since $X_1>0$ implies $\overline{H}_1>0$ and it is assumed that $X_1$ is not concentrated on a half-axis, it follows that $0\le\zeta<1$.

\begin{lemma}
For $x>0$, we have
\begin{equation}\label{E:factorzeta}
P\left(\overline{T}_1<T_1,H_1\ge x\right)=\zeta P\left(H_1\ge x\right).
\end{equation} 
\end{lemma}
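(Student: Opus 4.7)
My plan is to condition on $\overline{T}_1$ and exploit the strong Markov property, or equivalently the independence of disjoint blocks of the i.i.d.\ sequence $\{X_n\}$. The proof falls naturally into two ingredients, and I expect the first to be the main (modest) obstacle: identifying the event $\{\overline{T}_1<T_1\}$ up to null sets with the ``tie'' event $\{\overline{H}_1=0\}$, which has probability $\zeta$ by \eqref{E:defzeta}.

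For that identification, I would note that for $1\le k<\overline{T}_1$ the definition of $\overline{T}_1$ forces $S_k<0$, so $\max_{0\le k<\overline{T}_1}S_k=0$. Hence $S_{\overline{T}_1}>0$ would make $\overline{T}_1$ itself a strict ladder index, forcing $T_1\le\overline{T}_1$ and contradicting $\overline{T}_1<T_1$; consequently $S_{\overline{T}_1}=\overline{H}_1=0$. The reverse inclusion is immediate.

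The second ingredient expresses $H_1$ in terms of the walk after time $\overline{T}_1$. Fix $m\ge 1$ and work on $A_m:=\{S_1<0,\ldots,S_{m-1}<0,\,S_m=0\}=\{\overline{T}_1=m,\,\overline{H}_1=0\}$, and set $\tilde{S}_k:=S_{m+k}$. Built from $X_{m+1},X_{m+2},\ldots$, the walk $\tilde S$ is independent of $(X_1,\ldots,X_m)$ and has the same law as $\{S_n\}$. Because $S_k\le 0$ for $0\le k\le m$, no such $k$ can be a strict ladder index of $\{S_n\}$; consequently $T_1=m+\tilde{T}_1$ and $H_1=\tilde{H}_1$, the first strict ladder height of $\tilde S$.

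Combining the two ingredients,
\[
P(\overline{T}_1<T_1,\,H_1\ge x)=\sum_{m\ge 1}P(A_m)\,P(\tilde{H}_1\ge x)=\sum_{m\ge 1}P(A_m)\,P(H_1\ge x)=\zeta\,P(H_1\ge x)
\]
by \eqref{E:defzeta}, which is the desired identity. The only non-routine step is the first one---recognizing that the pre-$\overline{T}_1$ excursion stays in $(-\infty,0]$ and therefore contributes nothing to either $T_1$ or $H_1$. Once that is pinned down, the independence of the increments finishes the argument.
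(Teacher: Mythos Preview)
Your argument is correct and follows essentially the same line as the paper's: both proofs condition on the value of $\overline{T}_1$, use that $\{\overline{T}_1<T_1\}$ coincides with $\{\overline{H}_1=0\}$, and then restart the walk at time $\overline{T}_1$ via the Markov property (independence of blocks) to factor out $P(H_1\ge x)$. The only cosmetic difference is that the paper carries along a second index for $T_1$ and writes a double sum, whereas you sum only over $m=\overline{T}_1$; the content is the same.
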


\begin{proof}
If $0<k<n$ and $P\left(\overline{T}_1=k,\overline{H}_1=0\right)>0$, then 
\[
P\left(T_1=n, H_1\ge x|\overline{T}_1=k,\overline{H}_1=0\right)=P\left(T_1=n-k,H_1\ge x\right)
\]
by the Markov property. Thus
\begin{align}
P&\left(\overline{T}_1<T_1,H_1\ge x\right)\notag\\
&=\sum_{n=2}^\infty\sum_{k=1}^{n-1}P\left(\overline{T}_1=k,T_1=n,H_1\ge x\right)\notag\\
&=\sum_{k=1}^\infty\sum_{n=k+1}^\infty P\left(\overline{T}_1=k,\overline{H}_1=0\right)P\left(T_1=n,H_1\ge x|\overline{T}_1=k,\overline{H}_1=0\right)\notag\\
&=\sum_{k=1}^\infty P\left(\overline{T}_1=k,\overline{H}_1=0\right)\sum_{n=1}^\infty P\left(T_1=n,H_1\ge x\right).
\end{align}

\end{proof}

Denote ${L^n}^*$ the $n$-fold convolution of $L$ with itself and ${L^0}^*$ a unit mass at the point $0$. Define the measure $\psi$ by
\begin{equation}\label{E:defpsi}
\psi(A)=\sum_{n=0}^\infty {L^n}^*(A),\quad A\subset[0,\infty).
\end{equation}
We have, cf. \cite[Theorem 24.2(iii)]{Billingsley:1986},

\begin{theorem}
If $p<1$, then with probability $p^n(1-p)$ there are exactly $n$ ladder indices; with probability 1 there are only finitely many ladder indices and $\sup_n S_n<\infty$; finally
\begin{equation}\label{E:lawofsup}
P\left(\sup_{n\ge0}S_n\in A\right)=(1-p)\psi(A),\quad A\subset[0,\infty).
\end{equation}
\end{theorem}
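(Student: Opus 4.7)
The plan is to iterate the first-ladder-index construction, using the fact that the post-$T_1$ process behaves like a fresh copy of the walk, and then to read off the distribution of $\sup_n S_n$ as a convolution sum.

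First, I would introduce the successive ladder indices $T_1<T_2<\cdots$ and the associated ladder heights $H_k=S_{T_k}$, defined on the event that $T_k<\infty$. Because the $X_i$ are i.i.d., the strong Markov property applied at $T_1$ shows that, conditional on $T_1<\infty$, the increments $(T_2-T_1,H_2-H_1)$ are distributed like $(T_1,H_1)$ and independent of $\mathcal F_{T_1}$. Iterating, the increments $(T_{k+1}-T_k,H_{k+1}-H_k)$ form, as long as they are defined, an i.i.d. sequence with the same law as $(T_1,H_1)$. In particular, if $N$ denotes the total number of ladder indices, then for every $n\ge 0$,
\[
P(N\ge n)=P(T_1<\infty)^n=p^n,
\]
so $P(N=n)=p^n(1-p)$, proving the first claim. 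Since $p<1$, the geometric distribution is proper, and hence $N<\infty$ with probability $1$; this gives the second claim once one observes that $\sup_{k\ge 0}S_k=H_N$ on $\{N\ge 1\}$ and $\sup_{k\ge 0}S_k=0$ on $\{N=0\}$, both finite.

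For the distributional formula \eqref{E:lawofsup}, I would normalize $L$: by \eqref{E:LH1}, the sub-probability measure $L$ has total mass $p$, and $L/p$ is the conditional law of $H_1$ given $T_1<\infty$. Conditional on $N=n\ge 1$, $\sup_k S_k=H_n$ is the sum of the $n$ i.i.d.\ increments $H_1,H_2-H_1,\dots,H_n-H_{n-1}$, each distributed as $H_1\mid T_1<\infty$. Therefore the conditional law of $\sup_k S_k$ given $N=n$ is $(L/p)^{n*}$, and for any Borel $A\subset[0,\infty)$,
\[
P\!\left(\sup_{k\ge 0}S_k\in A\right)
=\sum_{n=0}^{\infty}P(N=n)\,(L/p)^{n*}(A)
=\sum_{n=0}^{\infty}(1-p)\,L^{n*}(A)
=(1-p)\psi(A),
\]
where the $n=0$ term contributes the mass $(1-p)$ at the origin corresponding to $\sup_k S_k=0$.

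The main obstacle is the strong-Markov step that justifies the i.i.d.\ structure of successive ladder increments; once that regeneration is in place, the geometric count of ladder epochs and the convolution identity are essentially bookkeeping. Care is also needed at $n=0$ so that the unit mass ${L^0}^*$ at the origin accounts correctly for the atom $P(N=0)=1-p$ of $\sup_k S_k$ at $0$.
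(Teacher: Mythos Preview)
Your argument is correct and is the standard regeneration proof. Note, however, that the paper does not give its own proof of this statement: it is quoted verbatim as a known result with a reference to \cite[Theorem~24.2(iii)]{Billingsley:1986}. Your approach is essentially the one found there, so there is nothing to compare.

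One small point worth making explicit in your write-up: when you pass from ``conditional on $N=n$'' to ``the first $n$ ladder-height increments are i.i.d.\ with law $L/p$,'' you are implicitly using that the event $\{N=n\}=\{T_n<\infty,\ T_{n+1}=\infty\}$ factors as $\{T_n<\infty\}$ intersected with an event measurable with respect to the post-$T_n$ walk, and that the latter is independent of $(H_1,\dots,H_n)$ by the regeneration at $T_n$. This is routine but should be stated, since without it one might worry that conditioning on $\{T_{n+1}=\infty\}$ biases the earlier increments.
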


Furthermore, cf. \cite[Theorem 24.3]{Billingsley:1986},

\begin{theorem}
The measure $\psi$ satisfies
\begin{equation}\label{E:recursion}
\int_{y\le x}\psi[0,x-y]\ dF(y) = \sum_{n=1}^\infty P\left(\min_{1\le k<n} S_k > 0, S_n\le x\right).
\end{equation}
\end{theorem}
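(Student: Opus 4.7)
My plan is to first establish the identity
\[
\psi[0,z]=1+\sum_{m=1}^\infty P\left(S_1>0,\dots,S_m>0,\,S_m\le z\right),\qquad z\ge 0,
\]
and then to convolve once with $F$ to obtain \eqref{E:recursion}.

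Starting from $\psi=\sum_{n\ge 0}L^{n\ast}$ with $L^{0\ast}=\delta_0$, I would use that for $n\ge 1$ the measure $L^{n\ast}$ is the sub-law of $S_{T_n}$ on the event $\{T_n<\infty\}$, so $L^{n\ast}[0,z]=P(T_n<\infty,\,S_{T_n}\le z)$. Because the ladder indices $T_1<T_2<\cdots$ are distinct and every strict ladder index equals some $T_k$, I would rearrange the sum over $n$ as a sum indexed by the time $m$:
\[
\sum_{n=1}^\infty L^{n\ast}[0,z]=\sum_{m=1}^\infty P\left(m\text{ is a strict ladder index},\,S_m\le z\right).
\]

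The key step will be Feller's time-reversal trick. For each fixed $m$, $(X_1,\dots,X_m)\overset{\mathrm{law}}{=}(X_m,\dots,X_1)$, so the dual partial sums $\widetilde S_k:=X_m+\cdots+X_{m-k+1}$ satisfy $S_j=S_m-\widetilde S_{m-j}$. Under this identification the event ``$m$ is a strict ladder index with $S_m\le z$'', i.e., $S_j<S_m$ for $0\le j<m$ together with $S_m\le z$, becomes $\widetilde S_k>0$ for $0<k\le m$ together with $\widetilde S_m\le z$. Hence for each $m$,
\[
P\left(m\text{ is a strict ladder index},\,S_m\le z\right)=P\left(S_1>0,\dots,S_m>0,\,S_m\le z\right),
\]
and summing over $m$ produces the displayed identity for $\psi[0,z]$.

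To finish, I would substitute into the left-hand side of \eqref{E:recursion}. The constant $1$ contributes $\int_{y\le x}dF(y)=P(S_1\le x)$, which is the $n=1$ summand on the right. For each $m\ge 1$, integrating $P(S_1>0,\dots,S_m>0,\,S_m\le x-y)$ against $dF(y)$ amounts to introducing an independent step $X_{m+1}\sim F$ and replacing $S_m\le x-X_{m+1}$ by $S_{m+1}\le x$; re-indexing $n=m+1$ then produces the remaining summands. The delicate point is the reversal step, where one must keep $S_0=0$ inside the strict-maximum condition so that the reversed walk is strictly positive at \emph{every} time $1,\dots,m$ and not merely $1,\dots,m-1$; this is exactly what forces $S_m>0$ to appear on the right of the identity for $\psi[0,z]$ and makes the convolution with $F$ produce the correct range of indices on the right-hand side of \eqref{E:recursion}.
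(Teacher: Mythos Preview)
The paper does not supply its own proof of this theorem; it is quoted from \cite[Theorem 24.3]{Billingsley:1986} without argument. Your proposal is correct and is in fact the standard proof: the duality identity $\psi[0,z]=\sum_{m\ge 0}P(S_1>0,\dots,S_m>0,\,S_m\le z)$ via time reversal, followed by a single convolution with $F$, is exactly how the result is established in Billingsley.
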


\section{One-sided geometric tails} \label{S:geotails}

We assume from now on that the random variables $X_i$ are integer valued.

\begin{theorem}[Geometric right tail]
Suppose that $\mathbb E[X_1]<0$ and that the right tail of $F$ is geometric:
\begin{equation}\label{E:georight}
P(X_1\ge x)=\xi\ r^x,\quad x=0,1,\dots,
\end{equation} 
where $0<\xi<1$ and $0<r<1$. Then $p<1$ and 
\begin{equation}\label{E:geolaw}
P\left(\sup_{n\ge0}S_n>x\right)=p\left[1-(1-p)(1-r)\right]^x,\quad x=0,1,\dots.
\end{equation}
Moreover, $1/[1-(1-p)(1-r)]$ is the unique root of the equation
\begin{equation}\label{E:identifyp}
\sum_{x=-\infty}^\infty s^x P(X_1=x)=1
\end{equation}
in the range $1<s<1/r$.
\end{theorem}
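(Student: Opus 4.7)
The plan is to exploit the memoryless property of the geometric right tail to compute the first-strict-ladder-height measure $L$ explicitly, feed the result through the generating-function form of \eqref{E:lawofsup}, and finally identify the resulting decay rate with a root of the step generating function via a pole-matching argument. I start by noting that $\mathbb{E}[X_1]<0$ forces $S_n\to-\infty$ a.s., so $\sup_nS_n<\infty$ a.s.\ and hence $p<1$. For the ladder-height distribution, observe that $T_1$ coincides with the first hitting time of $\{S_n>0\}$, so on $\{T_1=n<\infty\}$ we have $S_{T_1-1}=-y$ with $y\ge 0$ and $H_1=X_n-y$. Conditionally on $(T_1,S_{T_1-1})=(n,-y)$, the step $X_n$ is distributed as $X_1$ conditioned on $\{X_1\ge y+1\}$, and by \eqref{E:georight}
\[
P(X_1\ge y+k\mid X_1\ge y+1)=r^{k-1},\qquad k\ge 1,
\]
independent of $y$. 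Summing over the conditioning event inside $\{T_1<\infty\}$ gives $L[k,\infty)=p\,r^{k-1}$, equivalently $L(\{k\})=p(1-r)r^{k-1}$ for $k\ge 1$.

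Substituting $\hat L(s):=\sum_{k\ge 1}s^k L(\{k\})=p(1-r)s/(1-sr)$ into the generating-function form of \eqref{E:lawofsup} produces
\[
\mathbb{E}\!\left[s^{\sup_nS_n}\right]=\frac{1-p}{1-\hat L(s)}=\frac{(1-p)(1-sr)}{1-s\rho},\qquad \rho:=1-(1-p)(1-r).
\]
Reading off the coefficient of $s^k$ gives $P(\sup_nS_n=k)=(1-p)p(1-r)\rho^{k-1}$ for $k\ge 1$, and summing yields $P(\sup_nS_n>x)=p\rho^x$, which is \eqref{E:geolaw}.

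For the root identification, I apply the distributional fixed-point identity $\sup_nS_n\overset{\mathrm{law}}{=}\max(0,X_1+M')$, where $M'$ is an independent copy of $\sup_nS_n$. Splitting on the sign of $X_1+M'$ and rearranging yields the Wiener--Hopf-type identity
\[
(1-\phi(s))\,\mathbb{E}\!\left[s^{\sup_nS_n}\right]=\mathbb{E}\!\left[(1-s^{X_1+M'})\,\mathbf{1}_{X_1+M'\le 0}\right],
\]
where $\phi(s)$ denotes the left-hand side of \eqref{E:identifyp}. For $s\ge 1$ the right-hand side is a bounded expectation (integrand in $[0,1]$), whereas the left factor has a simple pole at $s=1/\rho$, since $\rho-r=p(1-r)>0$ prevents $1-sr$ from cancelling. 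Because $1/\rho\in(1,1/r)$, the function $\phi$ is finite and continuous there---the right tail converges because $1/\rho<1/r$, the left tail because $s>1$---so letting $s\uparrow 1/\rho$ forces $\phi(1/\rho)=1$. Uniqueness in $(1,1/r)$ follows from strict convexity of $\phi$ on $(0,1/r)$ (a consequence of the non-half-axis assumption) together with $\phi(1)=1$, $\phi'(1)=\mathbb{E}[X_1]<0$, and $\phi(s)\uparrow\infty$ as $s\uparrow 1/r$. The main delicacy lies in this pole-matching step---verifying finiteness of $\phi(1/\rho)$, absence of cancellation of the pole by $1-sr$, and continuity of the identity up to the boundary $s=1/\rho$; everything else reduces to the memoryless computation above together with routine generating-function manipulation.
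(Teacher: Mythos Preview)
Your proof is correct. The first half---using the memoryless property to show that the ladder-height measure $L$ is $p$ times a geometric distribution, and then summing to obtain the law of $\sup_n S_n$---is essentially the paper's argument, though you pass through the generating function $\hat L(s)$ where the paper computes the convolutions $L^{n*}$ (negative binomials) and sums $\psi\{x\}$ directly.

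The genuine divergence is in identifying $1/\rho$ as a root of \eqref{E:identifyp}. The paper substitutes its explicit formula for $\psi[0,x]$ into the preliminary recursion \eqref{E:recursion} at $x=0$; since $S_n\to-\infty$, the right-hand side of \eqref{E:recursion} equals $1$, and the resulting identity rearranges algebraically to $\phi(1/\rho)=1$. Your route instead uses the Lindley fixed-point equation $M\overset{\mathrm{law}}{=}\max(0,X_1+M')$ and a pole-matching argument: $\mathbb E[s^M]$ has a genuine pole at $s=1/\rho$ (since $\rho>r$ rules out cancellation by $1-sr$), the right-hand side of your Wiener--Hopf identity is bounded on $[1,1/\rho)$, so continuity of $\phi$ on $[1,1/r)$ forces $\phi(1/\rho)=1$. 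The paper's route is more elementary---no limit, no need to verify the pole survives---but it leans on the recursion \eqref{E:recursion} quoted from the preliminaries. Your argument is self-contained in that it uses only the distributional fixed point, at the cost of the boundary-limit justification you correctly flag as the delicate step. Uniqueness via strict convexity and $\phi(1)=1$ is the same in both.
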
 

\begin{proof}
For $x\ge0$ we have
\begin{align}
&P\left(\max_{0\le k<n}S_k\le0,S_n>x\right)\notag\\
&=P\left(S_n>x|S_n>0,\max_{0\le k<n}S_k\le0\right)P\left(\max_{0\le k<n}S_k\le0,S_n>0\right),
\end{align}
and also
\begin{align}
&P\left(S_n>x|S_n>0,\max_{0\le k<n}S_k\le0\right)\notag\\
&=P\left(X_n>x-S_{n-1}|X_n>-S_{n-1},\max_{0\le k<n}S_k\le0\right).
\end{align} 
Furthermore, since $X_n$ is independent of $S_0,\dots,S_{n-1}$, and the geometric distribution is {\em memoryless},
\begin{align}
&P\left(X_n>x-S_{n-1}|X_n>-S_{n-1},\max_{0\le k<n}S_k\le0\right)\notag\\
&=\frac{1}{\xi}P(X_n\ge x)=r^x.
\end{align}
Whence
\begin{equation}
P\left(\max_{0\le k<n}S_k\le0,S_n>x\right)=r^xP\left(\max_{0\le k<n}S_k\le0<S_n\right),
\end{equation}
and so 
\begin{align}
L((x,\infty))&=\sum_{n=1}^\infty P\left(\max_{0\le k<n}S_k\le0,S_n>x\right)\notag\\
&=r^x\sum_{n=1}^\infty P\left(\max_{0\le k<n}S_k\le0<S_n\right)=r^x p.
\end{align}
This shows that the measure $L$ is $p$ times the geometric distribution with parameter $1-r$. Hence $L^{n*}$ is $p^n$ times the negative binomial distribution with parameters $n$ and $1-r$: For $x>0$,
\[
L^{n*}\{x\}=
\begin{cases}
p^n\binom{x-1}{n-1}(1-r)^n r^{x-n}, &\text{if } 1\le n\le x;\\
0, &\text{otherwise}.
\end{cases}
\] 
In particular, for $x=1,2,\dots$,
\[
\psi\{x\}=\sum_{n=1}^x p^n\binom{x-1}{n-1}(1-r)^n r^{x-n}=p(1-r)[1-(1-p)(1-r)]^{x-1},
\]
so that 
\begin{align}\label{E:psi}
\psi([0,x])&=1+\sum_{k=1}^x p(1-r)[1-(1-p)(1-r)]^{k-1}\notag\\
&=
\begin{cases}
1+(1-r)x, &\text{if $p=1$;}\\
\frac{1}{1-p}-\frac{p}{1-p}\left[1-(1-p)(1-r)\right]^x, &\text{if $p<1$.}
\end{cases}
\end{align}
Since $\mathbb E[X_1]<0$, we have $S_n\to-\infty$ $P$-a.s and $p<1$. The identity \eqref{E:geolaw} now follows from \eqref{E:lawofsup}.

Next, since $S_n\to-\infty$, the right side of \eqref{E:recursion} is equal to one for $x=0$, so that by \eqref{E:psi}
\begin{equation}\label{E:step}
\sum_{y=-\infty}^0\left(1-p\left[1-(1-p)(1-r)\right]^{-y}\right)F\{y\}=1-p.
\end{equation}
Denote $f(s)$ the probability generating function of $X_1$ given on the left in \eqref{E:identifyp}. Because of \eqref{E:georight}, it is defined for $0\le s<1/r$ and 
\[
f(s)=\sum_{y=-\infty}^{-1}s^y F\{y\}+\frac{\xi(1-r)}{1-rs}.
\]
Since $1<1/[1-(1-p)(1-r)]<1/r$, it follows
from this, \eqref{E:step}, and $F(-1)=1-\xi$, that $f(1/[1-(1-p)(1-r)])=1$.

Finally, $f(1)=1$ and, since $P(X_1<0)>0$, 
\[
f''(s)=\sum_{\mathbb Z\backslash\{0,1\}}x(x-1)s^{x-2}F\{x\}>0
\]
for $0< s<1/r$. Hence equation \eqref{E:identifyp} cannot have more than one root in $(1,1/r)$.

\end{proof}

\begin{theorem}[Geometric left tail] \label{T:geoleft}
Suppose that $\mathbb E[X_1]<0$ and that the left tail of $F$ is geometric:
\begin{equation}\label{E:geoleft}
P(X_1\le x)=\xi r^{-x},\quad x=0,-1,\dots,
\end{equation}
where $0<\xi<1$ and $0<r<1$. Then 
\begin{equation}\label{E:findp}
(1-\zeta)p=r+(1-r)\mathbb E[X_1],
\end{equation}
and for $x=1,2,\dots$,
\begin{equation}\label{E:Lx}
(1-\zeta)L(0,x]=F(x)-F(0)+(1-r)\sum_{m=1}^x(1-F(m)).
\end{equation}
\end{theorem}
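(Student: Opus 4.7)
The plan is to compute the distribution of the first weak ascending ladder height $\overline{H}_1$ on $\{1,2,\dots,x\}$ directly, and then translate back to $L$ via the lemma \eqref{E:factorzeta}. The key tools are the memoryless property of the geometric left tail and Feller's step-reversal duality.

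Since $\overline{H}_1 \ge 1$ forces $\overline{T}_1 = T_1$ and $\overline{H}_1 = H_1$, \eqref{E:factorzeta} gives $(1-\zeta)P(H_1 \ge y) = P(\overline{H}_1 \ge y)$ for $y \ge 1$, and differencing yields
\[
(1-\zeta)L(0,x] = P(1 \le \overline{H}_1 \le x), \qquad x = 1,2,\dots.
\]
To compute the right-hand side, I partition on $\overline{T}_1$. The contribution from $\overline{T}_1 = 1$ is $P(X_1 = k)$; for $\overline{T}_1 = n \ge 2$, we have $S_1, \dots, S_{n-1} < 0$ and $X_n = k - S_{n-1} = k + a$ with $a := -S_{n-1} \ge 1$, so by independence of $X_n$ from the past,
\[
P(\overline{H}_1 = k,\ \overline{T}_1 \ge 2) = \sum_{a \ge 1} u(a)\, P(X_1 = k+a), \qquad k \ge 1,
\]
where $u(a) := \sum_{m \ge 1} P(S_1 < 0, \dots, S_{m-1} < 0,\ S_m = -a)$.

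The main obstacle is the identity $u(a) = 1-r$ for every $a \ge 1$, which I would establish in two steps. First, apply the step-reversal $\widetilde S_k := S_m - S_{m-k}$ to each summand: the inequalities $S_j < 0$ for $1 \le j \le m-1$ translate into $\widetilde S_k > -a$ for $1 \le k \le m-1$, and $S_m = -a$ into $\widetilde S_m = -a$; since $(\widetilde S_k) \stackrel{d}{=} (S_k)$, the $m$-th summand equals $P(\tau_{\le -a} = m,\ V_a = 0)$, where $\tau_{\le -a} := \min\{n \ge 1 : S_n \le -a\}$ and $V_a := -S_{\tau_{\le -a}} - a$ is the undershoot. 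Second, invoke the memoryless property of \eqref{E:geoleft}: conditional on $\tau_{\le -a} = m$ and the past, the restriction $X_m \le -a - S_{m-1}$ combined with \eqref{E:geoleft} forces $V_a$ to be geometric on $\{0, 1, 2, \dots\}$ with parameter $1-r$, independently of $m$ and of $S_{m-1}$. Hence $P(V_a = 0 \mid \tau_{\le -a} = m) = 1-r$, and since $\mathbb E[X_1] < 0$ implies $\tau_{\le -a} < \infty$ almost surely, summing over $m$ gives $u(a) = 1-r$.

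Substituting $u(a) = 1-r$ back yields $P(\overline{H}_1 = k) = P(X_1 = k) + (1-r) P(X_1 > k)$ for $k \ge 1$; summing $k$ from $1$ to $x$ gives \eqref{E:Lx}. Finally, \eqref{E:findp} comes from letting $x \to \infty$ in \eqref{E:Lx}: using the tail-sum representation of the mean together with $\sum_{m \ge 1} P(X_1 \le -m) = \xi r/(1-r)$ from \eqref{E:geoleft}, a direct computation collapses the limiting right-hand side to $r + (1-r) \mathbb E[X_1]$.
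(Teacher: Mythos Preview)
Your argument is correct, and it takes a genuinely different route from the paper's.

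The paper first establishes the geometric \emph{right} tail theorem, in particular the identity $\psi[0,x]=1+(1-r)x$ for the renewal measure \eqref{E:defpsi} when $p=1$, and then applies this to the reflected walk $\{-S_n\}$. Plugging that $\psi$ into the recursion \eqref{E:recursion} yields a closed form for $\sum_n P\bigl(\max_{1\le k<n}S_k<0,\ S_n\ge x\bigr)$, which via \eqref{E:factorzeta} equals $(1-\zeta)L[x,\infty)$. The memoryless property enters only indirectly, through the earlier computation of $L$ in the right-tail case.

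You bypass both the right-tail theorem and the $\psi$-recursion entirely. Your decomposition on $S_{\overline T_1-1}=-a$ reduces everything to the single identity $u(a)=1-r$, and your step-reversal argument identifies $u(a)$ with $P(V_a=0)$, the probability of zero undershoot at first passage below $-a$; the memoryless property of the geometric left tail then gives $1-r$ directly. This is more self-contained and makes the probabilistic mechanism (memoryless $\Rightarrow$ geometric undershoot, independent of the level) completely explicit. The paper's approach, by contrast, highlights the duality between the left- and right-tail cases and leverages standard renewal-theoretic machinery, at the cost of importing the previous theorem and the Billingsley recursion.
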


\begin{proof}
Assume first not \eqref{E:geoleft} but \eqref{E:georight}, and assume also that $\mathbb E[X_1]>0$. In that case, $S_n\to\infty$, $P$-a.s., so that $p=1$ by \eqref{E:defp}. Thus, from \eqref{E:psi}, we have  $\psi[0,x]=1+(1-r)x$ for $x\ge0$. Whence, for $x\ge0$ (cf. \eqref{E:recursion}),
\begin{align}\label{E:recleft}
\sum_{n=1}^\infty& P\left(\min_{1\le k<n} S_k>0,S_n\le x\right)\notag\\
&=\sum_{m\le x}\left[1+(1-r)(x-m)\right]P(X_1=m)\notag\\
&=[1+(1-r)x]P(X_1\le x)-(1-r)\sum_{m\le x}m P(X_1=m).
\end{align}

If \eqref{E:geoleft} holds with $\mathbb E[X_1]<0$, then the above applies to the sequence $\{-X_n\}$. In particular, if $X_n$ is replaced by $-X_n$, $x$ by $-x$, then \eqref{E:recleft} becomes
\begin{align}\label{E:recleft-}
\sum_{n=1}^\infty& P\left(\max_{1\le k<n} S_k<0,S_n\ge x\right)\notag\\
&=[1-(1-r)x]P(X_1\ge x)+(1-r)\sum_{m\ge x}m P(X_1=m).
\end{align}

Recall the definitions of the weak/strict first ladder index. We have for $x>0$
\begin{align}
P&\left(\max_{0\le k<n}S_k=0<S_n,S_n\ge x\right)-P\left(\max_{1\le k<n}S_k<0,S_n\ge x\right)\notag\\
&=P\left(\overline{T}_1<T_1=n,S_n\ge x\right).
\end{align}
Thus, from \eqref{E:factorzeta} and \eqref{E:LH1},
\begin{align}\label{E:Lzeta}
L[x,\infty)&=\sum_{n=1}^\infty P\left(\max_{1\le k<n}S_k<0,S_n\ge x\right)+\sum_{n=1}^\infty P\left(\overline{T}_1<T_1=n, S_n\ge x\right)\notag\\
&=\sum_{n=1}^\infty P\left(\max_{1\le k<n}S_k<0,S_n\ge x\right)+\zeta L[x,\infty).
\end{align}
Since
\[
\sum_{m\ge x} m P(X_1=m)=x P(X_1\ge x)+\sum_{m\ge x}(1-F(m)),
\] 
it follows from \eqref{E:recleft-} and \eqref{E:Lzeta} that 
\begin{equation}\label{E:Lzx}
(1-\zeta)L[x,\infty)=1-F(x-1)+(1-r)\sum_{m\ge x}(1-F(m)).
\end{equation}
From \eqref{E:geoleft},
\begin{equation}
\sum_{m\le0}m P(X_1=m)=-\xi r(1-r)\sum_{m\ge1}m r^{m-1}=-\xi\frac{r}{1-r}.
\end{equation} 
Using $F(0)=\xi$, this gives
\begin{equation}
\sum_{m\ge0}(1-F(m))=\mathbb E[X_1]-\sum_{m\le0} m P(X_1=m)=\mathbb E[X_1]+\frac{r}{1-r}F(0).
\end{equation}
Hence
\begin{align}\label{E:Lz1}
(1-\zeta)L[1,\infty)&=1-F(0)+(1-r)\sum_{m\ge1}(1-F(m))\notag\\
&=r+(1-r)\mathbb E[X_1],
\end{align}
from which \eqref{E:findp} follows. Applying \eqref{E:Lz1} and \eqref{E:Lzx} to
\[
L(0,x]=L[1,\infty)-L[x+1,\infty)
\]
gives \eqref{E:Lx}.
\end{proof}

\begin{corollary}[Khinchine-Pollaczek formula]
Under the assumptions of Theorem \ref{T:geoleft}, if 
\[
\mathcal M(s)=\mathbb E\left[s^{\sup_{n\ge0}S_n}\right],\qquad \mathcal F^+(s)=\mathbb E[s^{X_1},X_1>0],
\]
then
\begin{equation}
\mathcal M(s)=\frac{1-\zeta-r-(1-r)\mathbb E[X_1]}{1-\zeta-\left[\left(1-\frac{1-r}{1-s}\right)\mathcal F^+(s)+s\frac{1-r}{1-s}(1-F(0))\right]}.
\end{equation}
\end{corollary}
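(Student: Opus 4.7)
The plan is to reduce the proof to an explicit computation of the generating function $\mathcal L(s):=\sum_{x=1}^\infty s^x L\{x\}$ of the (defective) ladder-height measure, and then to insert the formulas already established in Theorem~\ref{T:geoleft}. Because $L$ is supported on the positive integers while $L^{0*}$ is the unit mass at $0$, the convolution series \eqref{E:defpsi} admits the geometric expansion
\[
\sum_{x=0}^\infty s^x\psi\{x\}=\sum_{n=0}^\infty \mathcal L(s)^n=\frac{1}{1-\mathcal L(s)}
\]
for $s$ with $|\mathcal L(s)|<1$. Combined with \eqref{E:lawofsup}, this yields the master identity $\mathcal M(s)=(1-p)/(1-\mathcal L(s))$, so the entire task reduces to putting $1-\mathcal L(s)$ in a useful closed form and rewriting the prefactor $1-p$.

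To compute $\mathcal L(s)$, I would difference \eqref{E:Lx} in $x$ (with the convention $L(0,0]=0$) to obtain, for every $x\ge 1$,
\[
(1-\zeta)L\{x\}=P(X_1=x)+(1-r)(1-F(x)),
\]
and then sum against $s^x$. This splits $(1-\zeta)\mathcal L(s)$ into $\mathcal F^+(s)$ plus $(1-r)\sum_{x\ge 1}s^x(1-F(x))$. The main algebraic step is the evaluation of this tail sum: writing $1-F(x)=\sum_{k\ge x+1}P(X_1=k)$ and interchanging the order of summation gives
\[
\sum_{x=0}^\infty s^x(1-F(x))=\frac{(1-F(0))-\mathcal F^+(s)}{1-s},
\]
after which removing the $x=0$ term produces $\sum_{x\ge 1}s^x(1-F(x))=[s(1-F(0))-\mathcal F^+(s)]/(1-s)$. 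Substituting back and collecting the two pieces of $(1-\zeta)\mathcal L(s)$ reproduces precisely the bracket $(1-(1-r)/(1-s))\mathcal F^+(s)+s(1-r)(1-F(0))/(1-s)$ that appears in the denominator of the corollary.

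The final step is cosmetic. By \eqref{E:findp}, $(1-\zeta)(1-p)=(1-\zeta)-(1-\zeta)p=1-\zeta-r-(1-r)\mathbb E[X_1]$, which is the numerator on the right side of the corollary. Multiplying numerator and denominator of the master identity $\mathcal M(s)=(1-p)/(1-\mathcal L(s))$ by $1-\zeta$ therefore delivers the claimed formula. I do not foresee a conceptual obstacle; the only delicate moment is the tail-sum reorganization, where one must keep track of whether the $x=0$ term is included, but this is book-keeping rather than a substantive difficulty.
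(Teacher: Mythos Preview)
Your proposal is correct and follows essentially the same route as the paper: both difference \eqref{E:Lx} to get $(1-\zeta)L\{x\}=P(X_1=x)+(1-r)P(X_1>x)$, sum against $s^x$ with an order interchange to obtain the bracketed expression for $(1-\zeta)\mathcal L(s)$, and then combine $\mathcal M(s)=(1-p)\sum_{n\ge0}\mathcal L(s)^n$ with \eqref{E:findp} after multiplying through by $1-\zeta$. The only cosmetic difference is that you organize the tail sum starting from $x=0$ and subtract the $x=0$ term, whereas the paper sums directly over $x\ge1$.
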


\begin{proof}
It follows from \eqref{E:Lx} that 
\[
(1-\zeta)L\{x\}=P(X_1=x)+r P(X_1>x).
\]
Thus, if $\mathcal L(s)=\mathbb E[s^{H_1},\sup_{n\ge0}S_n>0]$, then
\begin{align}\label{E:scriptL}
(1-\zeta)\mathcal L(s)&=\sum_{x=1}^\infty s^x P(X_1=x)+(1-r)\sum_{x=1}^\infty s^x\sum_{m=x+1}^\infty P(X_1=m)\notag\\
&=\mathcal F^+(s)+(1-r)\sum_{m=1}^\infty P(X_1=m)\left[\frac{s}{1-s}-\frac{s^m}{1-s}\right]\notag\\
&=\left(1-\frac{1-r}{1-s}\right)\mathcal F^+(s)+s\frac{1-r}{1-s}(1-F(0)).
\end{align} 
From \eqref{E:lawofsup} and \eqref{E:defpsi} we see that 
\begin{equation}
\mathcal M(s)=(1-p)\sum_{n=0}^\infty\mathcal L^n(s).
\end{equation}
Using \eqref{E:scriptL} and \eqref{E:findp}, the result follows.
 
\end{proof}

\section{An example from queuing theory}\label{S:example}

We now apply the above result to the abstract queue derived from the tandem queue described in subsection \ref{SS:furthermot}. That is, $N_1,N_2,\dots,$ and $M_1,M_2,\dots,$ are independent sequences of i.i.d. random variables, where $N_k$ counts the number of customers served by Server 1 during the $k$-th busy period, and $M_k$ is a geometric random variable with distribution given by \eqref{E:geo}. Since Server 1's queue is M/M/1, with load $a\equiv \beta/\alpha$ assumed to be less than 1, the distribution of $N_1$ is well known. We have, \cite[II.2]{Cohen:1982},  

\begin{equation}\label{E:distn}
P\{N_1=k\}=\frac{1}{2k-1}\binom{2k-1}{k}\frac{a^{k-1}}{(1+a)^{2k-1}},\quad k=1,2,\dots.
\end{equation}
These probabilities have the generating function
\begin{equation}
U(s)=\sum_{k=1}^{\infty}P\{N_1=k\}s^k
=\frac{1+a}{2a}\left(1-\sqrt{1-\frac{4as}{(1+a)^2}}\right), 
\end{equation}
which converges for $|s|<1+(1-a)^2/4a$. It follows readily that 
\begin{equation}\label{E:meanN}
\mathbb E[N]=\frac{1}{1-a}.
\end{equation}

The associated sequence of waiting times $\{Z_k\}$ is given by $Z_0=0$ and 
\[
Z_{k+1}=\max(0,Z_k+N_{k+1}-M_{k+1}), \quad k=0,1,\dots.
\]

Since $\mathbb E[N_k]=\alpha/(\alpha-\beta)$ (cf. \eqref{E:meanN}), and $\mathbb E[M_k]=\alpha/\gamma$, it follows that the G/G/1-queue has load
\[
b=\frac{\gamma}{\alpha-\beta}.
\]  
We have $b<1$ if and only if $\beta+\gamma<\alpha$, i.e., in the language of microtubules, if the average time for hydrolysis and dissociation of a monomer is less than the average time between the arrival of ATP(GTP)-bound monomers.

We now apply our random walk results from above. Let 
\[
X_k=N_k-M_k,\quad
\text{ and }\quad r=\frac{\alpha}{\alpha+\gamma}.
\] Then $b<1$ implies
\begin{equation}\label{E:load}
 \frac{1}{1-a}-\frac{r}{1-r}<0.
\end{equation}
Furthermore, 
\[
P(M_1\ge k)=r^k.
\]
Thus, if $V$ denotes the generating function of $M_1$, then
\begin{equation}\label{E:genT}
V(s)=\mathbb E[s^{M_1}]=\frac{1-r}{1-rs},\quad |s|<1/r.
\end{equation}
Denote $F$ the common distribution of $X_k$. Then $F(x)=\xi r^{-x}$, $x=0,-1,\dots,$ with $\xi=U(r)$,
\[
\mathbb E[X_1]=\mathbb E[N_1]-\mathbb E[M_1]=\frac{1}{1-a}-\frac{r}{1-r}<0,
\]
and 
\[
r+(1-r)\mathbb E[X_1]=\frac{1-r}{1-a}.
\] 
Moreover,
\[
\mathcal F^+(s)=V(1/s)[U(s)-U(r)],
\]
and 
\[
\mathcal M(s)=\frac{1-\zeta-\frac{1-r}{1-a}}{1-\zeta-(1-r)s\frac{1-U(s)}{1-s}}.
\]
A series representation for $\zeta$ is readily found using Lemma 2 in \cite[XVIII.3]{Feller:1971} and the fact that the constant term in $[U(s)V(1/s)]^n$ equals $P(X_1+\cdots+X_n=0)$.

\end{document}